\newtheorem{thm}{Theorem}[section]
\newtheorem{cor}[thm]{Corollary}
\newtheorem{lem}[thm]{Lemma}
\newenvironment{proof}[1][Proof]{\noindent\textbf{#1.} }{\hfill\rule{1mm}{2mm}}
\begin{document}
\title
{Cycles and Paths Embedded in Varietal Hypercubes
\thanks{The work was
supported by NNSF of China (No. 61272008).} }

\author
{ Jin Cao \quad Li Xiao \quad Jun-Ming Xu\footnote{Corresponding
author, E-mail address: xujm@ustc.edu.cn (J.-M. Xu)}
\\ \\
{\small Department of Mathematics}\\
{\small University of Science and Technology of China}\\
{\small  Wentsun Wu Key Laboratory of CAS}  \\
{\small Hefei, Anhui, 230026, China} }

\date{} \maketitle

\begin{minipage}{140mm}

\begin{center} {\bf Abstract} \end{center}

The varietal hypercube $VQ_n$ is a variant of the hypercube $Q_n$
and has better properties than $Q_n$ with the same number of edges
and vertices. This paper shows that every edge of $VQ_n$ is
contained in cycles of every length from 4 to $2^n$ except 5, and
every pair of vertices with distance $d$ is connected by paths of
every length from $d$ to $2^n-1$ except $2$ and $4$ if $d=1$.

\vskip0.4cm \noindent {\bf Keywords}\quad Combinatorics, cycle,
path, varietal hypercube, pancyclicity, panconnectivity

\vskip0.4cm \noindent {\bf AMS Subject Classification: }\ 05C38\quad
90B10

\end{minipage}

\vskip0.6cm

\section{Introduction}

The hypercube network $Q_n$ has proved to be one of the most popular
interconnection networks since it has a simple structure and has
many nice properties. As a variant of $Q_n$, the varietal hypercube
$VQ_n$, proposed by Cheng and Chuang~\cite{cc94} in 1994, has many
properties similar or superior to $Q_n$. For example, the
connectivity and restricted connectivity of $VQ_n$ and $Q_n$ are the
same (see Wang and Xu~\cite{wx09}), while, all the diameter and the
average distance, fault-diameter and wide-diameter of $VQ_n$ are
smaller than that of the hypercube (see Cheng and
Chuang~\cite{cc94}, Jiang {\it et al.}~\cite{jhl10}).

Several topological structures of multicomputer systems are commonly
used in various applications such as image processing and scientific
computing. Among them, the most common structures are paths and
cycles. Embedding these structures in various well-known networks,
such as $Q_n$, have been extensively investigated in the literature
(see, for example, a survey by Xu and ma~\cite{xm09}). However,
embedding these structures in $VQ_n$ has been not investigated as
yet. In this paper, we show that $VQ_n$ should be capable of
embedding these structures. Main results can be stated as follows.

Every edge of $VQ_n$ is contained in cycles of every length from 4
to $2^n$ except 5, and every pair of vertices with distance $d$ is
connected by paths of every length from $d$ to $2^n-1$ except $2$
and $4$ if $d=1$.

The proofs of these results are in Section 3. The definition and
some basic properties of $VQ_n$ are given in Section 2.

\section{Definitions and Lemmas}

We follow~\cite{x03} for graph-theoretical terminology and notation
not defined here. A graph $G=(V,E)$ always means a simple and
connected graph, where $V=V(G)$ is the vertex-set and $E=E(G)$ is
the edge-set of $G$. For $uv\in E(G)$, we call $u$ (resp. $v$) is a
neighbor of $v$ (resp. $u$). A {\it $uv$-path} is a sequence of
adjacent vertices, written as $(v_0,v_1,v_2,\cdots,v_m)$, in which
$u=v_0$, $v=v_m$ and all the vertices $v_0,v_1,v_2,\cdots,v_m$ are
different from each other, $u$ and $v$ is called the {\it
end-vertices} of $P$. If $u=v$, then a $uv$-path $P$ is called a
{\it cycle}. The {\it length} of a path $P$, denoted by
$\varepsilon(P)$, is the number of edges in $P$. The length of a
shortest $uv$-path in $G$ is called the {\it distance} between $u$
and $v$ in $G$, denoted by $d_G(u,v)$. For a path
$P=(v_0,v_1,\cdots,v_i, v_{i+1}, \cdots, v_m)$, we can write
$P=P(v_0,v_i)+v_iv_{i+1}+P(v_{i+1},v_m)$, and the notation
$P-v_iv_{i+1}$ denotes the subgraph obtained from $P$ by deleting
the edge $v_iv_{i+1}$.

\vskip30pt

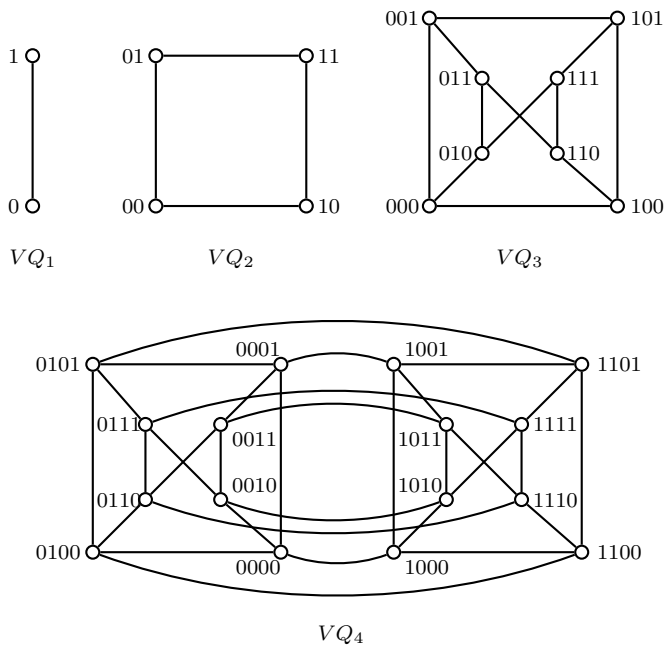
\begin{figure*}[ht]
\begin{pspicture}(-2.3,-.5)(1,3)
\cnode(1,1){.1}{0}\rput(.75,1){\scriptsize0}
\cnode(1,3){.1}{1}\rput(.75,3){\scriptsize1}
\ncline{0}{1}\rput(1,.3){\scriptsize$VQ_1$}
\end{pspicture}
\begin{pspicture}(-.5,-.5)(3,3)
\cnode(1,1){.1}{00}\rput(.7,1){\scriptsize00}
\cnode(1,3){.1}{01}\rput(.7,3){\scriptsize01}
\cnode(3,1){.1}{10}\rput(3.3,1){\scriptsize10}
\cnode(3,3){.1}{11}\rput(3.3,3){\scriptsize11}
\ncline{00}{01}\ncline{01}{11}\ncline{11}{10}\ncline{10}{00}
\rput(2,.3){\scriptsize$VQ_2$}
\end{pspicture}
\begin{pspicture}(-.5,-.5)(3,3)
\cnode(1,1){.1}{000}\rput(.64,1){\scriptsize000}
\cnode(1,3.5){.1}{001}\rput(.64,3.5){\scriptsize001}
\cnode(3.5,1){.1}{100}\rput(3.9,1){\scriptsize100}
\cnode(3.5,3.5){.1}{101}\rput(3.9,3.5){\scriptsize101}
\cnode(1.7,1.7){.1}{010}\rput(1.35,1.7){\scriptsize010}
\cnode(1.7,2.7){.1}{011}\rput(1.35,2.7){\scriptsize011}
\cnode(2.7,1.7){.1}{110}\rput(3.05,1.7){\scriptsize110}
\cnode(2.7,2.7){.1}{111}\rput(3.05,2.7){\scriptsize111}
\ncline{000}{001}\ncline{001}{101}\ncline{101}{100}\ncline{100}{000}
\ncline{010}{011}\ncline{011}{110}\ncline{111}{110}\ncline{111}{010}
\ncline{000}{010}\ncline{001}{011}\ncline{101}{111}\ncline{100}{110}
\rput(2.2,.3){\scriptsize$VQ_3$}
\end{pspicture}

\vskip2pt
\begin{pspicture}(-3.1,0)(8,4)

\cnode(1,1){.1}{0100}\rput(.54,1){\scriptsize0100}%
\cnode(1,3.5){.1}{0101}\rput(.54,3.5){\scriptsize0101}%
\cnode(3.5,1){.1}{0000}\rput(3.2,0.8){\scriptsize0000}%
\cnode(3.5,3.5){.1}{0001}\rput(3.2,3.7){\scriptsize0001}%
\cnode(1.7,1.7){.1}{0110}\rput(1.35,1.7){\scriptsize0110}%
\cnode(1.7,2.7){.1}{0111}\rput(1.35,2.7){\scriptsize0111}%
\cnode(2.7,1.7){.1}{0010}\rput(3.15,1.9){\scriptsize0010}%
\cnode(2.7,2.7){.1}{0011}\rput(3.15,2.5){\scriptsize0011}

\ncline{0000}{0001}\ncline{0001}{0101}\ncline{0101}{0100}\ncline{0100}{0000}
\ncline{0010}{0011}\ncline{0011}{0110}\ncline{0111}{0110}\ncline{0111}{0010}
\ncline{0000}{0010}\ncline{0001}{0011}\ncline{0101}{0111}\ncline{0100}{0110}

\cnode(5,1){.1}{1000}\rput(5.44,0.8){\scriptsize1000}
\cnode(5,3.5){.1}{1001}\rput(5.44,3.7){\scriptsize1001}
\cnode(7.5,1){.1}{1100}\rput(8.,1){\scriptsize1100}
\cnode(7.5,3.5){.1}{1101}\rput(8.0,3.5){\scriptsize1101}
\cnode(5.7,1.7){.1}{1010}\rput(5.35,1.9){\scriptsize1010}
\cnode(5.7,2.7){.1}{1011}\rput(5.35,2.5){\scriptsize1011}
\cnode(6.7,1.7){.1}{1110}\rput(7.15,1.7){\scriptsize1110}
\cnode(6.7,2.7){.1}{1111}\rput(7.15,2.7){\scriptsize1111}

\ncline{1000}{1001}\ncline{1001}{1101}\ncline{1101}{1100}\ncline{1100}{1000}
\ncline{1010}{1011}\ncline{1011}{1110}\ncline{1111}{1110}\ncline{1111}{1010}
\ncline{1000}{1010}\ncline{1001}{1011}\ncline{1101}{1111}\ncline{1100}{1110}

\nccurve[angleA=20,angleB=160]{0101}{1101}
\nccurve[angleA=20,angleB=160]{0111}{1111}
\nccurve[angleA=-20,angleB=-160]{0110}{1110}
\nccurve[angleA=-20,angleB=-160]{0100}{1100}
\nccurve[angleA=20,angleB=160]{0001}{1001}
\nccurve[angleA=20,angleB=160]{0011}{1011}
\nccurve[angleA=-20,angleB=-160]{0010}{1010}
\nccurve[angleA=-20,angleB=-160]{0000}{1000}
\rput(4.3,-0.1){\scriptsize$VQ_4$}

\end{pspicture}

\caption{\label{f1}{\footnotesize \ The varietal hypercubes $VQ_1,
VQ_2, VQ_3$ and $VQ_4$}}
\end{figure*}

The {\it $n$-dimensional varietal hypercube} $VQ_n$ is the labeled
graph defined recursively as follows. $VQ_1$ is the complete graph
of two vertices labeled with $0$ and $1$, respectively. Assume that
$VQ_{n-1}$ has been constructed. Let $VQ^0_{n-1}$ (resp.
$VQ^1_{n-1}$) be a labeled graph obtained from $VQ_{n-1}$ by
inserting a zero (resp. $1$ ) in front of each vertex-labeling in
$VQ_{n-1}$. For $n>1$, $VQ_n$ is obtained by joining vertices in
$VQ^0_{n-1}$ and $VQ^1_{n-1}$, according to the rule: a vertex
$x=0x_{n-1}x_{n-2}x_{n-3}\cdots x_2x_1$ in $VQ^0_{n-1}$ and a vertex
$y=1y_{n-1}y_{n-2}y_{n-3}\cdots y_2y_1$ in $VQ^1_{n-1}$ are adjacent
in $VQ_n$ if and only if

1) $x_{n-1}x_{n-2}x_{n-3}\cdots x_2x_1=y_{n-1}y_{n-2}y_{n-3}\cdots
y_2y_1$ if $n\ne 3k$, or

2) $x_{n-3}\cdots x_2x_1=y_{n-3}\cdots y_2y_1$ and $(x_{n-1}x_{n-2},
y_{n-1}y_{n-2})\in I$ if $n=3k$, where
$I=\{(00,00),(01,01),(10,11),(11,10)\}$.

Figure~\ref{f1} shows the examples of varietal hypercubes $VQ_n$ for
$n=1, 2, 3$ and $4$.

The edges of Type $2)$ are referred to as {\it crossing edges} when
$(x_{n-1}x_{n-2}, y_{n-1}y_{n-2})\in \{(10,11),(11,10)\}$. All the
other edges are referred to as {\it normal edges}.

The varietal hypercube $VQ_n$ is proposed by Cheng and
Chuang~\cite{cc94} as an attractive alternative to the
$n$-dimensional hypercube $Q_n$ when they are used to model the
topological structure of a large-scale parallel processing system.
Like $Q_n$, $VQ_n$ is an $n$-regular graph with $2^n$ vertices and
$n2^{n-1}$ edges.

For convenience, we express $VQ_n$ as $VQ_n=L\odot R$, where
$L=VQ^0_{n-1}$ and $R=VQ^1_{n-1}$, and denote by $x_Lx_R$ the
$n$-transversal edge joining $x_L\in L$ and $x_R\in R$. The
recursive structure of $VQ_n$ gives the following simple properties.

\begin{lem}\label{lem1}\  Let $VQ_n=L\odot R$ with $n\ge
1$. Then $VQ_n$ contains no triangles and every vertex $x_L\in L$
has exactly one neighbor $x_R$ in $R$ joined by the $n$-transversal
edge $x_Lx_R$.
\end{lem}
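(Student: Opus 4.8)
The plan is to treat the two assertions separately: the ``exactly one neighbor'' statement follows directly from the defining adjacency rule of $VQ_n$, while the triangle-free statement is an induction on $n$ that rests on the first assertion.

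First I would establish that each $x_L\in L$ has exactly one transversal neighbor in $R$. Write $x_L=0x_{n-1}x_{n-2}\cdots x_1$. If $n\neq 3k$, the rule forces any neighbor $y=1y_{n-1}\cdots y_1$ to satisfy $y_{n-1}\cdots y_1=x_{n-1}\cdots x_1$, which pins $y$ down uniquely. If $n=3k$, the rule requires $y_{n-3}\cdots y_1=x_{n-3}\cdots x_1$ together with $(x_{n-1}x_{n-2},\,y_{n-1}y_{n-2})\in I$; since each first coordinate of a pair in $I$ occurs exactly once, the block $y_{n-1}y_{n-2}$ is determined, so again $y$ is unique. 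Thus $x_L$ has precisely one transversal neighbor $x_R$, joined by the $n$-transversal edge $x_Lx_R$. I would also record the symmetric fact, obtained the same way (using that each second coordinate in $I$ likewise appears exactly once, i.e. $I$ is single-valued in both directions), that every $x_R\in R$ has exactly one transversal neighbor in $L$. Hence the transversal edges form a perfect matching between $L$ and $R$.

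For the absence of triangles I would induct on $n$, the base case $VQ_1=K_2$ being triangle-free by inspection. Assuming $VQ_{n-1}$ has no triangle, both $L$ and $R$ are triangle-free, so any triangle $T$ in $VQ_n=L\odot R$ must use at least one transversal edge and therefore meet both $L$ and $R$. Since $T$ has only three vertices, it splits as two in one part and one in the other, say $a,b\in L$ and $c\in R$. Then $ca$ and $cb$ are both transversal edges, so $c$ would have two distinct neighbors $a\neq b$ in $L$, contradicting the matching property just proved. The symmetric split is excluded identically, so no triangle exists.

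The lemma is essentially a bookkeeping argument, so I do not expect a serious obstacle; the one place that warrants care is the $n=3k$ case, where I must confirm that $I=\{(00,00),(01,01),(10,11),(11,10)\}$ is single-valued in each coordinate, so that ``exactly one neighbor'' holds from both sides and the transversal edges genuinely form a matching. Once that is verified, the triangle count is forced by the vertex split.
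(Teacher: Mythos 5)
Your proof is correct. The paper itself offers no proof of this lemma (it is stated as an immediate consequence of the recursive structure), so there is nothing to compare against; your two observations --- that $I=\{(00,00),(01,01),(10,11),(11,10)\}$ is single-valued in each coordinate, so the $n$-transversal edges form a perfect matching between $L$ and $R$, and that a triangle would then either lie wholly inside a triangle-free copy of $VQ_{n-1}$ or force some vertex to have two transversal neighbors --- supply exactly the details the authors leave implicit, and the induction is sound.
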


\begin{lem}\label{lem1a}\
Let $VQ_n=L\odot R$ and $xy$ be an $n$-transversal edge in $VQ_n$
with $x\in L$ and $y\in R$. For $n\ge 3$, let $x=0abx_{n-3}\cdots
x_1$ and $\beta=x_{n-3}\cdots x_1$. Then $y=1a'b'\beta$, where
$ab=a'b'$ if $xy$ is a normal edge, and $(ab,a'b')=(1b,1\bar{b})$ if
$xy$ is a crossing edge, where $\bar{b}=\{0,1\}\setminus{b}$.
\end{lem}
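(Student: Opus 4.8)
The plan is to prove the statement by directly unpacking the recursive adjacency rule defining the $n$-transversal edges and then carrying out a short case analysis. By Lemma~\ref{lem1}, $x$ has a unique neighbor $y$ in $R$ joined by the $n$-transversal edge $xy$, so it suffices to determine the label of $y$ from the label $x = 0ab\beta$ (writing $a = x_{n-1}$, $b = x_{n-2}$, and $\beta = x_{n-3}\cdots x_1$) and to decide which of the two edge types the edge is.

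First I would split on whether $n$ is divisible by $3$. If $n \neq 3k$, rule 1) in the definition applies: $x$ and $y$ are adjacent precisely when their full suffixes agree, that is $x_{n-1}\cdots x_1 = y_{n-1}\cdots y_1$. Hence $y = 1ab\beta$, so $a' = a$ and $b' = b$, giving $a'b' = ab$; moreover every such edge is normal, since $(ab,a'b')\notin\{(10,11),(11,10)\}$ by the definition of crossing edges. This settles the non-multiple-of-$3$ case completely.

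The remaining case $n = 3k$ is where crossing edges can arise. Here rule 2) forces $\beta = x_{n-3}\cdots x_1 = y_{n-3}\cdots y_1$, so $y = 1a'b'\beta$ with $(ab,a'b') \in I = \{(00,00),(01,01),(10,11),(11,10)\}$, and I would enumerate the four admissible pairs. For $(00,00)$ and $(01,01)$ one has $a'b' = ab$, and since these pairs lie outside $\{(10,11),(11,10)\}$ the edge is normal, matching the claim. For $(10,11)$ and $(11,10)$ the edge is crossing by definition; in both pairs $a = a' = 1$ and $b' = \bar b$, so $(ab,a'b') = (1b,1\bar b)$, exactly as asserted.

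I do not expect a genuine obstacle here: the statement is essentially a bookkeeping reformulation of the defining adjacency rule, and the only point that requires care is aligning the paper's definition of a crossing edge (the pairs $(10,11)$ and $(11,10)$) with the compact formula $(ab,a'b') = (1b,1\bar b)$, which holds because both crossing pairs share the leading bit $1$ while flipping the second bit. Combining the four subcases of $n = 3k$ with the single subcase of $n \neq 3k$ then yields the stated dichotomy between normal and crossing edges.
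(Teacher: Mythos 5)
Your proof is correct: the case analysis on $n\ne 3k$ versus $n=3k$ and the enumeration of the four pairs in $I$ is exactly the bookkeeping the paper has in mind when it states this lemma without proof as an immediate consequence of the recursive definition. The identification of $(10,11)$ and $(11,10)$ with the compact form $(1b,1\bar b)$ is verified correctly in both subcases.
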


\begin{lem}\label{lem2}\ Any edge in $VQ_n$ ($n\ge 2$)
is contained in a cycle of length $4$.
\end{lem}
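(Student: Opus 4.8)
The plan is to induct on $n$, exploiting the recursive splitting $VQ_n=L\odot R$ with $L=VQ^0_{n-1}$ and $R=VQ^1_{n-1}$. For the base case $n=2$ one checks directly that $VQ_2$ is itself the $4$-cycle $(00,01,11,10)$, so every edge already lies on a cycle of length $4$. For $n\ge 3$ I would classify each edge of $VQ_n$ as either an edge lying inside $L$ or inside $R$, or an $n$-transversal edge joining the two halves; by the construction of $VQ_n$ these are the only possibilities. The first case is immediate from the induction hypothesis: an edge contained in $L$ (or $R$) is an edge of a copy of $VQ_{n-1}$, and since $n-1\ge 2$ that copy already contains a $4$-cycle through the edge, which is also a cycle of $VQ_n$. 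So the entire content of the inductive step lies in the transversal case.

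For a transversal edge $e=xy$ with $x\in L$ and $y\in R$, my idea is to obtain the remaining two vertices by complementing the lowest coordinate $x_1$. Let $x'$ be the vertex obtained from $x$ by complementing $x_1$. A routine structural induction shows that complementing the lowest bit always produces an edge of $VQ_m$ for every $m\ge1$ (it is a dimension-$1$ edge and is never a crossing edge), so $xx'\in E(L)$. Let $y'$ be the unique $n$-transversal partner of $x'$ guaranteed by Lemma~\ref{lem1}. I then want to show that $y'$ is adjacent to $y$ in $R$; granting this, the four distinct vertices $x,x',y',y$ form the $4$-cycle $(x,x',y',y)$ through $e$, since $xx'\in E(L)$, the edge $x'y'$ is $n$-transversal, $y'y\in E(R)$, and $yx=e$.

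The one point needing care is precisely the adjacency of $y'$ and $y$, and here is where the crossing edges (present when $n=3k$) could interfere, since the transversal matching then also toggles the bit $x_{n-2}$. Lemma~\ref{lem1a} dispels this worry. Writing $x=0abx_{n-3}\cdots x_1$ and $\beta=x_{n-3}\cdots x_1$, the partner of $x$ is $y=1a'b'\beta$ with the \emph{same} suffix $\beta$, so the matching fixes all the low coordinates and in particular commutes with complementing $x_1$. Because $x'$ shares the leading pair $ab$ with $x$, it is of the same type (normal or crossing) and therefore keeps the same $a'b'$; hence its partner is $1a'b'\beta'$, where $\beta'$ is $\beta$ with $x_1$ complemented. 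This is exactly the vertex obtained from $y=1a'b'\beta$ by complementing $x_1$, which by the same structural fact is a neighbour of $y$ in $R$. Thus $y'$ is adjacent to $y$, the cycle closes, and the induction is complete.

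The main obstacle I anticipate is thus not the overall induction but the verification that the low-bit flip ``survives'' the transversal matching uniformly in both the normal and crossing cases; the key is that Lemma~\ref{lem1a} shows the matching acts only on the top pair $ab$ and leaves $\beta$ untouched, so complementing $x_1$ behaves identically on $x$ and on its partner $y$ regardless of the value of $n \bmod 3$.
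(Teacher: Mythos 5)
Your construction is sound and it is genuinely different from the paper's. The paper does not induct on $n$: it observes that every edge is an $m$-transversal edge for some $m$, passes to $VQ_m=L\odot R$, and closes the $4$-cycle by taking a neighbour $u_L$ of $x$ in $L$ together with its transversal partner $u_R$, with a separate explicit choice in the crossing case. Your version --- induct on $n$, push internal edges into $L$ or $R$, and close a transversal edge by flipping the lowest bit on both sides --- is arguably cleaner, because ``flip $x_1$'' is a canonical choice of $u_L$ that provably survives the matching, whereas the paper's ``let $u_L$ be a neighbour of $x$'' in the normal case is not valid for an arbitrary neighbour (in $VQ_3$ with $x=000$, $y=100$, the neighbour $u_L=010$ has crossing partner $u_R=111$, which is not adjacent to $100$).

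There is, however, one boundary case where your justification (though not your construction) breaks: $n=3$, i.e.\ the $3$-transversal edges, which your induction must treat directly and which sit at the bottom of the recursion for every larger $VQ_n$. When $n=3$ the suffix $\beta$ of Lemma~\ref{lem1a} is empty and $x_1=b$, so $x'=0a\bar b$ does \emph{not} share the leading pair $ab$ with $x$, and for a crossing edge the partner's pair changes from $1\bar b$ to $1b$; the sentence ``the matching fixes all the low coordinates'' is false here. The conclusion survives anyway: in the normal case $y=10b$ and $y'=10\bar b$, and in the crossing case $y=11\bar b$ and $y'=11b$, so in both cases $y'$ is still $y$ with its last coordinate complemented and hence adjacent to $y$ in $R$. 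You should either check $n=3$ separately (it is visible in Figure~\ref{f1}) or note explicitly that the crossing matching acts on $b$ by complementation and therefore still commutes with complementing $x_1$.
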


\begin{proof}\
Clearly, the conclusion is true for $n=2$. Assume $n\ge 3$ and let
$xy$ be any edge in $VQ_n$. Then by definition of $VQ_n$ there is
some $m$ with $2\le m\le n$ such that $xy$ is an $m$-transversal
edge. Let $VQ_m=L\odot R$, $x\in L$ and $y\in R$.

If $xy$ is a normal edge, let $u_L$ be a neighbor of $x$ in $L$ and
$u_R$ be the neighbor of $u_L$ in $R$, then $y$ and $u_R$ are
adjacent and so $(x,u_L,u_R,y)$ is a cycle of length $4$.

If $xy$ is a crossing edge, let $x=01b\beta$, then
$y=11\bar{b}\beta$. Choose $u_L=01\bar{b}\beta$. Then $u_R=11b\beta$
by Lemma~\ref{lem1a}, and so $(x,u_L,u_R,y)$ is a cycle of length
$4$.
\end{proof}

\begin{lem}\label{lem3}\ Any $n$-transversal edge must be contained in some
cycle of length $5$ unless $n\ne 3k$ for $k\ge 1$.
\end{lem}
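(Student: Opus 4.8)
The statement is to be read as: when $n=3k$ (so that $VQ_n$ actually has crossing edges), every $n$-transversal edge lies on some $5$-cycle. The plan is to reduce the general case to the single base graph $VQ_3$. Write $VQ_n=L\odot R$ and let $xy$ be the given $n$-transversal edge, with $x=0ab\beta\in L$ and $y=1a'b'\beta\in R$, where $\beta=x_{n-3}\cdots x_1$, as supplied by Lemma~\ref{lem1a}. As a guiding observation, note that normal edges flip the weight-parity of a vertex while crossing edges preserve it; since a $5$-cycle is odd, it must contain an odd number of crossing edges. Combined with triangle-freeness (Lemma~\ref{lem1}), this tells me in advance that the cycle I build will use exactly one crossing edge and will move only among the top three bit-positions.

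First I would freeze the lower part of the label. Let $H$ be the subgraph of $VQ_n$ induced by all vertices whose low $n-3$ coordinates equal $\beta$. I claim $H\cong VQ_3$ under the map that deletes these fixed coordinates, $x_nx_{n-1}x_{n-2}\beta\mapsto x_nx_{n-1}x_{n-2}$. The verification rests on residues mod $3$: since $n\equiv 0$, the dimension-$n$ edges obey the second adjacency rule and, because the suffix-matching condition holds automatically inside $H$, they reproduce exactly the dimension-$3$ edges of $VQ_3$ through the very same involution $I$ on the top two bits; since $n-1\equiv 2$ and $n-2\equiv 1$, the dimension-$(n-1)$ and dimension-$(n-2)$ edges obey the first adjacency rule and, with $\beta$ held fixed, collapse to the dimension-$2$ and dimension-$1$ edges of $VQ_3$. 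No lower-dimensional edge stays inside $H$, since any such edge alters $\beta$. As $xy$ lies in $H$ and maps to a $3$-transversal edge, it suffices to settle $VQ_3$.

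To finish I would simply exhibit, for each of the four $3$-transversal edges of $VQ_3$, a $5$-cycle through it: the normal edge $(000,100)$ lies on $(000,100,110,011,001)$ and the crossing edge $(010,111)$ lies on $(010,111,101,001,011)$, the other two following by the evident bit-$1$ symmetry. Pulling these back through the isomorphism (reinstating $\beta$ in every coordinate) yields, in the general normal case $x=00b\beta$, the $5$-cycle $(00b\beta,10b\beta,11b\beta,01\bar b\beta,00\bar b\beta)$, and in the crossing case $x=01b\beta$, the $5$-cycle $(01b\beta,11\bar b\beta,10\bar b\beta,00\bar b\beta,01\bar b\beta)$; in each one checks that the five listed vertices are distinct and consecutively adjacent, the unique crossing edge being the pair $11b\beta,01\bar b\beta$ in the first case and $01b\beta,11\bar b\beta$ in the second.

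The hard part is the isomorphism asserted in the second paragraph: one must check scrupulously that restricting the three relevant connection rules to the fixed suffix $\beta$ produces neither too few nor too many edges, the crux being that rule~$2)$ with the involution $I$ acting on the top two bits matches the dimension-$3$ rule of $VQ_3$ verbatim. Anyone preferring to avoid the reduction can instead take the two explicit cycles above as a self-contained argument, verifying each consecutive adjacency directly from the definition and Lemma~\ref{lem1a}; that route replaces the conceptual step by a short, purely mechanical case split.
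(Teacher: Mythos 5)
Your constructive argument is correct, and once you pull the two $VQ_3$ cycles back through the isomorphism you obtain, up to orientation, \emph{exactly} the cycles the paper writes down: for a normal edge $x=00b\beta$, $y=10b\beta$ the cycle $(x,00\bar b\beta,01\bar b\beta,11b\beta,y)$ with one crossing edge $01\bar b\beta\,11b\beta$, and for a crossing edge $x=01b\beta$, $y=11\bar b\beta$ the cycle $(x,01\bar b\beta,00\bar b\beta,10\bar b\beta,y)$. The paper simply states these five-tuples and leaves the adjacency checks to the reader; your detour through the induced subgraph $H\cong VQ_3$ (which is a correct and worthwhile observation, resting on $n\equiv 0$, $n-1\equiv 2$, $n-2\equiv 1 \pmod 3$) is extra scaffolding around the same construction rather than a different proof. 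Your parity remark (normal edges flip weight parity, crossing edges preserve it, so an odd cycle needs an odd number of crossing edges) is sound and is a nicer explanation of \emph{why} the case $n=3k$ is special than anything in the paper.

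The one substantive divergence is scope. You read the lemma as asserting only the positive direction (existence of a $5$-cycle when $n=3k$), which the awkward ``unless $n\ne 3k$'' phrasing does literally support. But the paper's proof of this lemma also establishes the converse --- that for $n\ne 3k$ an $n$-transversal edge lies on \emph{no} $5$-cycle --- by a short contradiction argument on a hypothetical cycle $(x,u,z,v,y)$, and it is precisely this negative half that is invoked later (the ``except $5$'' in Theorem~\ref{thm3.1} and the ``no $xy$-paths of length $4$'' remark before the panconnectivity theorem). If your proof is to replace the paper's, you must either add that non-existence argument here or prove it where it is used; your parity observation alone does not yield it, since a $5$-cycle through a normal $n$-transversal edge could in principle pick up its odd number of crossing edges at lower dimensions $m=3k<n$.
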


\begin{proof}\ Let $VQ_n=L\odot R$ and $xy$ be an $n$-transversal edge
in $VQ_n$, where $x\in L$ and $y\in R$. We first prove that $xy$ is
not contained in any cycle of length $5$ if $n\ne 3k$ for $k\ge 1$.
The conclusion is true for $n=1$ or $2$ clearly. Assume $n\ge 3$
below.

Suppose that there is a cycle $C=(x,u,z,v,y)$ of length 5 containing
the edge $xy$. Then $C$ contains two $n$-transversal edges. Since
$n\ne 3k$, $xy$ is a normal edge. Let $x=0ab\beta$, where
$\beta=x_{n-3}\cdots x_1$. Then $y=1ab\beta$. Since every vertex in
$L$ has exactly one neighbor in $R$ by Lemma~\ref{lem1}, $u\in L$
and $v\in R$. Without loss of generality, assume $z\in L$. Then $x$
and $z$ differ in exactly two positions. Without loss of generality,
let $z=0\bar{a}\bar{b}\beta$. Since $zv$ is an $n$-transversal edge
and $n\ne 3k$, $v=1\bar{a}\bar{b}\beta$. Thus, $y$ and $v$ differ in
exactly two positions, which implies that $y$ and $v$ are not
adjacent, a contradiction.

We now show that the $n$-transversal edge $xy$ must be contained in
some cycle of length $5$ if $n=3k$ for $k\ge 1$ by constructing such
a cycle. Let $x=0ab\beta\in L$ and $y=1a'b'\beta\in R$, where
$(ab,a'b')\in I$. A required cycle $C=(x,u,z,v,y)$ can be
constructed as follows.

If $xy$ is a normal edge, then $ab=a'b'=0b$. Let $u=00\bar{b}\beta$,
$z=01\bar{b}\beta$ and $v=11b\beta$ (where $zv$ is a crossing edge).

If $xy$ is a crossing edge, then $(ab,a'b')=(1b,1\bar{b})$. Let
$u=01\bar{b}\beta$, $z=00\bar{b}\beta$ and $v=10\bar{b}\beta$ (where
$zv$ is a normal edge).

The lemma follows.
\end{proof}

\begin{lem}\label{lem4}\ Any $n$-transversal edge in $VQ_n$ is contained in
cycles of length $6$ and $7$ for $n\ge 3$.
\end{lem}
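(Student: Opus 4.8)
The plan is to obtain both cycles by enlarging the smaller cycles already constructed in Lemmas~\ref{lem2} and~\ref{lem3}, using a single elementary move: replace one edge $e$ of a cycle that lies entirely inside $L$ (or entirely inside $R$) by a path of length $3$ with the same end-vertices. This move is always available, for applying Lemma~\ref{lem2} to $e$ within the copy $VQ_{n-1}\cong L$ (legitimate since $n-1\ge 2$) places $e$ on a $4$-cycle, whose other three edges form a path of length $3$ between the ends of $e$; its two internal vertices lie in the same half as $e$ and hence differ from every vertex of the cycle lying in the opposite half, so the result is again a simple cycle, now longer by $2$. It is also worth recording a parity obstruction that dictates the shape of the argument: if we colour each vertex by the parity of the number of $1$'s in its label, then every normal edge is bichromatic and every crossing edge is monochromatic (a crossing edge flips two bits, a normal edge one), so any odd cycle must use at least one crossing edge.

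For the $6$-cycle I would take the $4$-cycle $C=(x,u_L,u_R,y)$ through $xy$ supplied by Lemma~\ref{lem2}, in which $xu_L$ is an edge of $L$, and apply the move to $xu_L$. This turns $C$ into a cycle of length $6$ through $xy$, and it is uniform in $xy$ because Lemma~\ref{lem2} already treats the normal and crossing cases together.

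For the $7$-cycle the parity remark forces a crossing edge into play, so I would split on $n\bmod 3$. If $n=3k$, Lemma~\ref{lem3} already puts $xy$ on a $5$-cycle $C_5=(x,u,z,v,y)$ in which $vy$ is an edge of $R$; applying the move to $vy$ produces a $7$-cycle through $xy$, its new vertices confined to $R$. If $n\ne 3k$, then $n\ge 4$, the edge $xy$ is normal, and the $n$-transversal matching is suffix-preserving, joining $0w$ to $1w$. Writing $x=0w_0$ and $y=1w_0$, I would find a $5$-cycle $(c_0,c_1,c_2,c_3,c_4)$ of $VQ_{n-1}$ with $c_0=w_0$; since $VQ_{n-1}$ is triangle-free (Lemma~\ref{lem1}), $c_0$ and $c_2$ lie at distance $2$, and so are joined both by the length-$2$ path $c_0c_1c_2$ and the length-$3$ path $c_0c_4c_3c_2$. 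Lifting the former into $R$ and the latter into $L$ gives the desired $7$-cycle $(0c_0,1c_0,1c_1,1c_2,0c_2,0c_3,0c_4)$, whose seven vertices are distinct because the two arcs lie in opposite halves.

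The crux is producing a $5$-cycle through the \emph{prescribed} vertex $w_0$ of $VQ_{n-1}$, and here the recursive structure must be used. Choosing a multiple of three $3j\le n-1$ (possible as $n-1\ge 3$), I would fix all coordinates of $w_0$ outside positions $3j,\,3j-1,\,3j-2$ and let those three vary; the resulting $8$-vertex subgraph is isomorphic to $VQ_3$, since the dimension-$3j$ crossing couples bits $3j$ and $3j-2$ and therefore stays inside the slice, while dimensions $3j-1$ and $3j-2$ act normally. A direct inspection shows every vertex of $VQ_3$ lies on a $5$-cycle, whence so does $w_0$. The step I expect to demand the most care is verifying this isomorphism rigorously — in particular checking that no crossing edge belonging to another dimension escapes the chosen three-dimensional slice, which is exactly the point where a naive attempt at a single uniform construction breaks down.
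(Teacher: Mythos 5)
Your proof is correct, and two of its three constructions coincide with the paper's own: the $6$-cycle (merging the $4$-cycle through $xy$ with a $4$-cycle through $xu_L$ inside $L$) and the $7$-cycle for $n=3k$ (merging the $5$-cycle of Lemma~\ref{lem3} with a $4$-cycle through $vy$ inside $R$) are exactly the paper's argument, repackaged as your ``replace an edge by a length-$3$ detour'' move. One small caution: as stated in general, that move also requires the two new internal vertices to avoid the \emph{same}-half vertices of the original cycle, not only the opposite-half ones; this is harmless here because in both applications the only same-half vertices are the endpoints of the replaced edge, but the blanket justification is slightly overstated. Where you genuinely diverge is the $7$-cycle for $n\ne 3k$. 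The paper chooses the $4$-cycle $(x,u,v,y)$ so that $xu$ is a $3$-transversal edge of a sub-$VQ_3$ of $L$ and invokes Lemma~\ref{lem3} to put a $5$-cycle through that \emph{edge}, then takes the symmetric difference (giving a $7$-cycle with five vertices in $L$ and two in $R$); you instead put a $5$-cycle through the \emph{vertex} $w_0$ in $VQ_{n-1}$ and lift its two arcs into opposite halves, stitched together by the two normal (hence suffix-preserving) $n$-transversal edges at $c_0$ and $c_2$ (four vertices in $L$, three in $R$). Both reductions bottom out in an embedded $VQ_3$; yours trades Lemma~\ref{lem3} for a finite inspection of $VQ_3$, and to its credit actually verifies that the three-bit slice is an induced copy of $VQ_3$ --- that no crossing edge of another dimension enters it --- a point the paper asserts without proof. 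Your parity observation (every odd cycle must use a crossing edge) does not appear in the paper and cleanly explains why this case cannot be handled by normal edges alone. Note that taking $j=1$, i.e.\ the bottom three bits, always suffices since $n-1\ge 3$, so the general slice at position $3j$ is more flexibility than you need.
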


\begin{proof}\
Let $VQ_n=L\odot R$ and $xy$ be an $n$-transversal edge in $VQ_n$,
where $x\in L$ and $y\in R$.

We first show that $xy$ is contained in a cycle of length $6$. By
Lemma~\ref{lem2}, there is a cycle $C$ of length $4$. Let
$C=(x,u,v,y)$, where $u\in L$ and $v\in R$. Also by
Lemma~\ref{lem2}, there is a cycle $C'$ of length $4$ containing the
$xu$ in $L$. Clearly, $C\cap C'=\{xu\}$. Thus, $C\cup C'-xu$ is a
cycle of length $6$ containing the edge $xy$.

We now show that $xy$ is contained in a cycle of length $7$. If
$n=3k$ for $k\ge 1$ then, by Lemma~\ref{lem3}, there is a cycle $C$
of length $5$ containing the edge $xy$. Let $C=(x,u,z,v,y)$, where
$x,u,z\in L$ and $v\in R$, without loss of generality. By
Lemma~\ref{lem2}, there is a cycle $C'$ of length $4$ containing the
edge $yv$ in $R$. Clearly, $C\cap C'=\{yv\}$. Thus $C\cup C'-yv$ is
a cycle of length $7$ containing the edge $xy$.

Assume $n\ne 3k$ for $k\ge 1$ below. In this case, all
$n$-transversal edges are normal edges. We can choose a cycle
$C=(x,u,v,y)$ such that the edge $xu$ lies on some subgraph $H$ that
is isomorphic to $VQ_{3}$. By Lemma~\ref{lem3}, there is a cycle
$C'$ of length $5$ containing the edge $xu$ in $H\subseteq L$. Then
$C\cup C'-xu$ is a cycle of length $7$ containing the edge $xy$.

The lemma follows.
\end{proof}

\vskip6pt

The {\it $n$-dimensional crossed cube} $CQ_n$ is such a graph, its
vertex-set is the same as $VQ_n$, two vertices $x=x_n\cdots x_2x_1$
and $y=y_n\cdots y_2y_1$ are linked by an edge if and only if there
exists some $j$ $(1\le j\le n)$ such that

(a) $x_n\cdots x_{j+1}=y_n\cdots y_{j+1}$,

(b) $x_j\ne y_j$,

(c) $x_{j-1}=y_{j-1}$ if $j$ is even, and

(d) $(x_{2i}x_{2i-1}, y_{2i}y_{2i-1})\in I$ for each $i=1,2,\cdots,
\left\lceil \frac 12j\right\rceil -1$.

\vskip6pt

By definitions, $VQ_n\cong CQ_n$ for each $n=1,2,3$. The following
results on $CQ_n$ are used in the proofs of our main results for
$n=3$.

\begin{lem}\label{lem6}\
{\rm (Fan {\it et al.}~\cite{fjl06}, Xu and Ma~\cite{xml06}, Yang
and Megson~\cite{ym05})}\ For any two vertices $x$ and $y$ with
distance $d$ in $CQ_n$ with $n\geq 2$, $CQ_n$ contains $xy$-paths of
every length from $d$ to $2^n-1$ except 2 when $d=1$.
\end{lem}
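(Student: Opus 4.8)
The plan is to prove the statement by induction on $n$, using the recursive structure of the crossed cube in exactly the way Lemmas~\ref{lem2}--\ref{lem4} exploit the decomposition $VQ_n=L\odot R$. Write $CQ_n=L\odot R$, where $L\cong R\cong CQ_{n-1}$ are the subgraphs induced by the vertices with leading bit $0$ and $1$, joined by a perfect matching $M$; for $u\in L$ denote by $\bar u\in R$ its unique partner under $M$. The base cases of small $n$ (in particular $n=3$, where $CQ_3\cong VQ_3$) are checked directly, and one records at the outset that, like $VQ_n$ in Lemma~\ref{lem1}, $CQ_n$ is triangle-free; this is what forbids a path of length $2$ between adjacent vertices and accounts for the single exception.

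For the inductive step I would split into two cases according to the positions of $x$ and $y$. If $x,y$ lie in the same half, say $L$, then $d_{CQ_n}(x,y)=d_L(x,y)=d$, and for every target length $\ell$ with $d\le\ell\le 2^{n-1}-1$ the induction hypothesis applied inside $L$ already yields an $xy$-path of length $\ell$ (omitting $\ell=2$ when $d=1$). The remaining long lengths $2^{n-1}\le\ell\le 2^n-1$ are obtained by a detour: choose an $xy$-path in $L$, pick one of its edges $uv$, and replace $uv$ by $u,\bar u,Q,\bar v,v$, where $Q$ is a $\bar u\bar v$-path in $R$ supplied by the induction hypothesis. The new length is (length of the $L$-path) $+$ (length of $Q$) $+1$, and by letting the length of $Q$ range over its admissible interval in $R$ and, if needed, also varying the underlying $L$-path, one sweeps out every value up to the Hamiltonian length $2^n-1$.

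If $x\in L$ and $y\in R$, I would use paths that cross $M$ exactly once: every choice of an intermediate vertex $u\in L$ gives a path $P_L(x,u)+u\bar u+P_R(\bar u,y)$ of length $\ell_L+1+\ell_R$, where $P_L$ and $P_R$ are furnished by the induction hypothesis in $L$ and $R$ and are automatically vertex-disjoint. Here $\ell_L$ ranges over $[d_L(x,u),2^{n-1}-1]$ and $\ell_R$ over $[d_R(\bar u,y),2^{n-1}-1]$, so the attainable totals run from the true distance $d$ up to $(2^{n-1}-1)+(2^{n-1}-1)+1=2^n-1$; selecting $u$ and the split $\ell_L+\ell_R$ appropriately realizes each intermediate length.

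The main obstacle is the bookkeeping in the long-path regime: one must verify that the union of the attainable lengths --- obtained by varying which edge is rerouted, how deep the detour into the other half goes, and how the total is split between the two halves --- is exactly the interval $[d,2^n-1]$ with no gaps, rather than merely a subset of it. The delicate points are the boundary lengths near $2^{n-1}$ (where the detour has just been switched on) and near $2^n-1$ (a near-Hamiltonian path, forcing both halves to contribute almost maximally at once), together with the need to route around the missing length-$2$ subpaths coming from the induction hypothesis so that the only globally excluded value is $\ell=2$ in the case $d=1$.
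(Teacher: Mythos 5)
First, a point of comparison: the paper does not prove Lemma~\ref{lem6} at all --- it is imported verbatim from the cited works on crossed cubes \cite{fjl06,xml06,ym05} and used only for $n=3$. So there is no in-paper argument to measure yours against; what you have written is an independent attempt at a known but genuinely nontrivial theorem. The architecture you propose (the $L\odot R$ decomposition, detours through the matching, splitting a cross-path at one matching edge) is the right one and mirrors the paper's own $VQ_n$ arguments, but as written it is an outline, and the part you defer as ``bookkeeping'' is precisely where all of the content lies.

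Three concrete gaps. (i) In the same-half case your detour replaces an edge $uv$ of an $xy$-path in $L$ by $u,\bar u,Q,\bar v,v$, giving total length $\varepsilon(P_L)+\varepsilon(Q)+1$. But in $CQ_n$ the partners $\bar u,\bar v$ of adjacent vertices need not be adjacent (you would need a crossed-cube analogue of Lemma~\ref{lem2.8} to control $d_R(\bar u,\bar v)$), and when they \emph{are} adjacent the inductive exclusion of $\varepsilon(Q)=2$ punches a hole at total length $\varepsilon(P_L)+3$. Showing that varying the rerouted edge or the underlying $L$-path always fills such holes --- especially in the near-Hamiltonian regime, where $\varepsilon(P_L)=2^{n-1}-1$ is forced and cannot be adjusted --- is the actual theorem, and no argument is given. (ii) In the cross case, for a fixed intermediate vertex $u$ the attainable minimum is $d_L(x,u)+1+d_R(\bar u,y)$, which exceeds $d$ unless $u$ lies on a geodesic, and the forbidden value $\ell_L=2$ when $d_L(x,u)=1$ again threatens gaps; ``selecting $u$ and the split appropriately'' is an assertion, not a proof. (iii) Your base cases are not innocuous: the lemma as stated is in fact false for $n=2$ --- $CQ_2$ is a $4$-cycle, and its antipodal pair (distance $2$) admits no path of length $3=2^2-1$ --- so a correct induction must be anchored at $n=3$ with an explicit verification on the $8$-vertex graph, and the $n=2$ case restricted to $d=1$. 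In short, the skeleton is plausible, but the proposal does not yet constitute a proof of the cited result.
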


\begin{lem}\label{lem2.7} \ For $n\ge 3$ and any integer $\ell$
with $2^n-2\le\ell\le 2^n-1$, there exists an $xy$-path of length
$\ell$ between any pair of vertices $x$ and $y$ in $VQ_n$.
\end{lem}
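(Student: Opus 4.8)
The plan is to prove the statement by induction on $n$, exploiting the fact that a path of length $2^n-1$ in $VQ_n$ is a \emph{Hamiltonian path} (it meets all $2^n$ vertices), while a path of length $2^n-2$ is a near-Hamiltonian path omitting exactly one vertex. For the base case $n=3$ I would invoke $VQ_3\cong CQ_3$ together with Lemma~\ref{lem6}: the required lengths $6=2^3-2$ and $7=2^3-1$ both exceed $2$, so the single exception in Lemma~\ref{lem6} does not apply and the desired paths exist between every pair of vertices of $VQ_3$.

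For the inductive step ($n\ge 4$) I would write $VQ_n=L\odot R$ with $L=VQ^0_{n-1}$ and $R=VQ^1_{n-1}$, each isomorphic to $VQ_{n-1}$, and assume that both statements (Hamiltonian and near-Hamiltonian paths between \emph{any} pair) hold inside $L$ and $R$. Given distinct $x,y$, I would split into two cases. If $x$ and $y$ lie in the same half, say both in $L$ (the case of $R$ being symmetric), take a Hamiltonian path $P_L$ of $L$ from $x$ to $y$, pick any edge $u_Lv_L$ on $P_L$, and reroute through $R$: since the transversal map is a bijection (Lemma~\ref{lem1}) we have $u_R\ne v_R$, so by the inductive hypothesis $R$ has a Hamiltonian $u_Rv_R$-path $Q_R$. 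Then $P=P_L(x,u_L)+u_Lu_R+Q_R+v_Rv_L+P_L(v_L,y)$ has length $(2^{n-1}-2)+2+(2^{n-1}-1)=2^n-1$ and is Hamiltonian. Replacing $Q_R$ by a near-Hamiltonian $u_Rv_R$-path of length $2^{n-1}-2$, again guaranteed by induction, drops exactly one vertex of $R$ and yields the required path of length $2^n-2$.

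If $x\in L$ and $y\in R$, I would first choose a vertex $u_L\in L$ with $u_L\notin\{x,y_L\}$, where $y_L$ is the transversal neighbor of $y$; this is possible because $|L|=2^{n-1}\ge 8$. The inductive hypothesis then supplies a Hamiltonian $xu_L$-path $P_L$ in $L$ and, since $u_R\ne y$, a Hamiltonian $u_Ry$-path $Q_R$ in $R$; concatenating via the transversal edge $u_Lu_R$ gives $P=P_L+u_Lu_R+Q_R$ of length $2^n-1$. For length $2^n-2$ I would keep $P_L$ but replace $Q_R$ by a near-Hamiltonian $u_Ry$-path in $R$, which exists by induction and omits one vertex of $R$.

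The step I expect to require the most care is verifying in each case that the gluing produces a genuine path with the correct vertex count, i.e. that the chosen transversal endpoints are distinct from one another and from $x,y$, and that replacing a sub-path by its near-Hamiltonian version omits exactly one vertex overall rather than breaking the concatenation. All such distinctness conditions hold comfortably once $|L|=|R|=2^{n-1}\ge 8$, so no boundary case beyond $n=3$ is troublesome.
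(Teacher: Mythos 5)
Your proposal is correct and takes essentially the same route as the paper: induction on $n$ with the base case $n=3$ handled via $VQ_3\cong CQ_3$ and Lemma~\ref{lem6}, followed by the same $L\odot R$ case split in which one transversal edge (or a broken edge of a path in $L$) is used to detour through a Hamiltonian or near-Hamiltonian path in the other half. The only cosmetic difference is in the same-half case, where you keep the $L$-portion Hamiltonian and vary the length of the $R$-detour, while the paper varies the length of the $xy$-path in $L$ and always uses a Hamiltonian path in $R$.
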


\begin{proof}\
We proceed by induction on $n\ge 3$. By Lemma~\ref{lem6}, the
conclusion is true for $n=3$ since $VQ_3\cong CQ_3$. Assume the
induction hypothesis for $n-1$ with $n\ge 4$. Let $VQ_n=L\odot R$,
$x$ and $y$ be two distinct vertices in $VQ_n$.

If $x, y\in L$ (or $R$) then, by the induction hypothesis, there
exists an $xy$-path $P_L$ of length $\ell_0$ in $L$, where
$\ell_0\in\{2^{n-1}-2,2^{n-1}-1\}$. Let $u$ be the neighbor of $y$
in $P_L$, $u_R$ and $y_R$ be the neighbors of $u$ and $y$ in $R$,
respectively. By the induction hypothesis, there exists a
$u_Ry_R$-path $P_R$ of length $2^{n-1}-1$ in $R$. Then
$P_L-uy+uu_R+P_R+y_Ry$ is an $xy$-path of length $\ell_0+2^{n-1}$ in
$VQ_n$.

If $x\in L$ and $y\in R$, let $u$ be a vertex in $L$ rather than $x$
such that its neighbor $u_R$ in $R$ is different from $y$, then, by
the induction hypothesis, there exist an $xu$-path $P_L$ of length
$\ell'_0$ in $L$ and a $u_Ry$-path $P_R$ of length $2^{n-1}-1$ in
$R$, where $\ell'_0\in \{2^{n-1}-2,2^{n-1}-1\}$. Then $P_L+uu_R+P_R$
is an $xy$-path of length $\ell'_0+2^{n-1}$ in $VQ_n$.

The lemma follows.
\end{proof}

\begin{lem}\label{lem2.8}
Let $VQ_n=L\odot R$, $x_L$ and $y_L$ be two vertices in $L$. Then
$d_L(x_L,y_L)=d_R(x_R,y_R)$ if $n\ne 3k$ and
$|d_L(x_L,y_L)-d_R(x_R,y_R)|\le 2$ if $n=3k$ for $k\ge 1$.
\end{lem}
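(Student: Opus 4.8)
The plan is to reduce everything to a single copy of $VQ_{n-1}$. The canonical bijections $L=VQ^0_{n-1}\to VQ_{n-1}$ and $R=VQ^1_{n-1}\to VQ_{n-1}$ that delete the leading bit are graph isomorphisms, hence isometries, so writing $x_L=0z_x$ and $y_L=0z_y$ one has $d_L(x_L,y_L)=d_{VQ_{n-1}}(z_x,z_y)$. By Lemma~\ref{lem1a} the transversal partner $x_R$ of $x_L$ equals $1\phi(z_x)$, where $\phi$ is the self-map of $VQ_{n-1}$ that fixes the suffix across a normal edge and replaces the leading two bits $1b$ by $1\bar b$ across a crossing edge; consequently $d_R(x_R,y_R)=d_{VQ_{n-1}}(\phi(z_x),\phi(z_y))$. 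Thus the whole lemma becomes a statement about how far $\phi$ can distort distances in $VQ_{n-1}$, and I will write $d$ for $d_{VQ_{n-1}}$ from now on.

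If $n\ne 3k$, then by the definition of $VQ_n$ every $n$-transversal edge is normal, so $\phi$ is the identity and $d_L(x_L,y_L)=d(z_x,z_y)=d_R(x_R,y_R)$, as required.

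Now assume $n=3k$. Reading off Lemma~\ref{lem1a}, $\phi$ fixes every vertex whose leading bit is $0$ (normal edge) and sends $1b\beta$ to $1\bar b\beta$ (crossing edge). The key step, which I would isolate as a claim, is that $\phi$ moves every vertex by at most one edge, i.e. $d(z,\phi(z))\le 1$ for all $z$. For leading bit $0$ this is immediate. For a vertex $1b\beta$ I would observe that $1b\beta$ and $1\bar b\beta$ both lie in the half $VQ^1_{n-2}$ and differ precisely in the leading bit of that copy of $VQ_{n-2}$, so they are joined by the $(n-2)$-transversal edge; since $n=3k$ forces $n-2=3k-2$ to be a nonmultiple of $3$, this edge is normal, and a normal transversal edge does join two strings that agree off the leading bit. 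This is exactly where the hypothesis $n=3k$ enters: it guarantees that the flipped bit sits at a level ($n-2$) that is not a multiple of $3$, so the bit-flip is realised by a single genuine edge rather than a longer detour.

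Granting the claim, the triangle inequality finishes the proof: for all $z_x,z_y$,
$$d(\phi(z_x),\phi(z_y))\le d(\phi(z_x),z_x)+d(z_x,z_y)+d(z_y,\phi(z_y))\le d(z_x,z_y)+2,$$
and the reverse bound $d(z_x,z_y)\le d(\phi(z_x),\phi(z_y))+2$ follows symmetrically from the same displacement estimate $d(z,\phi(z))\le 1$; together these give $|d_L(x_L,y_L)-d_R(x_R,y_R)|\le 2$. I expect the only real obstacle to be the claim that $\phi$ displaces each vertex by at most one step; the delicate part of that is the careful bookkeeping of the crossing/normal dichotomy in Lemma~\ref{lem1a} together with the observation that levels $n-1$ and $n-2$ are not multiples of $3$, so that the top-level transversal edges involved are all normal.
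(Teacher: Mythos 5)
Your proof is correct, and it rests on the same core observation as the paper's: the leading-bit-flip $0z\mapsto 1z$ is an isomorphism $L\to R$, and a crossing $n$-transversal edge displaces its $R$-endpoint from the mirror image $1z$ by exactly one edge of $R$, namely a normal $(n-2)$-transversal edge, which exists precisely because $n-2\not\equiv 0\pmod 3$ when $n=3k$. The packaging, however, is genuinely different. The paper takes an explicit shortest $x_Ly_L$-path $P_L$, mirrors it vertex-by-vertex into a path $P_R$ in $R$, and then truncates $P_R$ at its ends to reach $x_R$ and $y_R$, losing at most one edge per endpoint; the reverse inequality is obtained by repeating the construction from $R$ to $L$. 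You instead encode the transversal correspondence as a self-map $\phi$ of $VQ_{n-1}$, prove the displacement bound $d(z,\phi(z))\le 1$, and finish with two applications of the triangle inequality. Your formulation buys a cleaner and more robust endgame: the paper's truncation step tacitly assumes that when an endpoint $x=01b\beta$ has a crossing transversal edge, the shortest path $P_L$ leaves $x$ along the particular ``induced crossing edge'' to $01\bar{b}\beta$, so that $x_R$ actually lies on the mirrored path; your triangle-inequality argument needs no such assumption, since it is indifferent to whether one deletes or prepends an edge at each end. One small point of polish: in your key claim you assert that $1b\beta$ and $1\bar{b}\beta$ ``are joined by the $(n-2)$-transversal edge'' before establishing that this edge is normal; logically you should first note $n-2\equiv 1\pmod 3$, conclude the $(n-2)$-transversal edges of that copy of $VQ_{n-2}$ pair strings agreeing off the leading bit, and only then deduce the adjacency. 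The content is right either way.
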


\begin{proof}
Without loss of generality, assume $d_L(x_L,y_L)\le d_R(x_R,y_R)$.
Let $P_L$ be a shortest $x_Ly_L$-path in $L$ and $P_R$ a path in $R$
obtained from $P_L$ by replacing the first position $0$ by $1$ in
every vertices. Clearly, $\varepsilon(P_R)=\varepsilon(P_L)$.

Note that for an edge $u_Lv_L$ in $P_L$, if $u_Lv_R$ is a crossing
edge, then $v_Lu_R$ is also a crossing edge. For convenience, we
call the edge $u_Lv_L$ an {\it induced crossing edge}, $u_L$ and
$v_L$ {\it induced crossing vertices}.

If both $x$ and $y$ are not induced crossing vertices, then $P_R$ is
an $x_Ry_R$-path in $R$, and so $d_R(x_R,y_R)\le
\varepsilon(P_R)=d_L(x_L,y_L)$, and so $d_R(x_R,y_R)=d_L(x_L,y_L)$.
Assume below that $\{x,y\}$ contains induced crossing vertices. Then
$n=3k$.

Let $x$ be an induced crossing vertex, $xu_L$ an induced crossing
edge. Then, $x_R$ is not an end-vertex of $P_R$, while $u_R$ is an
end-vertex of $P_R$. Similarly, if $y$ is an induced crossing
vertex, $yv_L$ an induced crossing edge, then $y_R$ is not an
end-vertex of $P_R$, while $v_R$ is an end-vertex of $P_R$. Thus, an
$x_Ry_R$-path $P'_R\subseteq P_R$ has length
 $$
 \varepsilon(P'_R)=\varepsilon(P_L)- \left\{\begin{array}{ll}
 0\ &\ {\rm if\ neither}\ x\ {\rm and}\ y\ {\rm are\ induced\ crossing\ vertices};\\
 1\ &\ {\rm if\ either}\ x\ {\rm or}\ y\ {\rm is\ an\ induced\ crossing\ vertex};\\
 2\ &\ {\rm if\ both}\ x\ {\rm and}\ y\ {\rm are\ induced\ crossing\ vertices},
 \end{array}\right.
 $$
and $d_R(x_R,y_R)\le \varepsilon(P'_R)$. If $d_R(x_R,y_R)\le
d_L(x_L,y_L)-3$ then, using the above method, we can prove that
there is an $x_Ly_L$-path $P'_L$ with length $\varepsilon(P'_L)\le
d_R(x_R,y_R)+2$, from which we have $d_L(x_L,y_L)\le
\varepsilon(P'_L)\le d_R(x_R,y_R)+2\le d_L(x_L,y_L)-1$, a
contradiction. Thus, $d_R(x_R,y_R)\ge d_L(x_L,y_L)-2$. And so
$|d_L(x_L,y_L)-d_R(x_R,y_R)|\le 2$. The lemma follows.
\end{proof}

\begin{cor}\label{cor2.9}
Let $VQ_n=L\odot R$, $x$ and $y$ be two vertices in $H$, where
$H\in\{L,R\}$. Then $d_H(x,y)=d_{VQ_n}(x,y)$.
\end{cor}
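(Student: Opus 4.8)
The plan is to prove the two inequalities $d_{VQ_n}(x,y)\le d_H(x,y)$ and $d_H(x,y)\le d_{VQ_n}(x,y)$ separately. The first is immediate: since $H$ is a subgraph of $VQ_n$, every $xy$-path lying in $H$ is also an $xy$-path in $VQ_n$, so a shortest one in $VQ_n$ can be no longer. Assume without loss of generality that $H=L$; all the work goes into the reverse inequality, namely showing that detouring through $R$ never shortens a path.

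To this end I would take a shortest $xy$-path $P$ in $VQ_n$ and decompose it according to which side each vertex lies on. Since the only edges joining $L$ and $R$ are $n$-transversal edges, and $x,y\in L$, the path $P$ breaks into maximal segments lying in $L$ alternating with maximal \emph{excursions} into $R$; each excursion has the form $a_La_R+Q+b_Rb_L$, where $Q$ is a subpath of $P$ inside $R$ running from $a_R$ to $b_R$, and $a_L,b_L\in L$ are the $L$-endpoints bracketing it. By Lemma~\ref{lem1} the transversal neighbour of each vertex is unique, so this block structure is well defined, and $a_R,b_R$ are exactly the transversal images of $a_L,b_L$.

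The key step is to replace each such excursion by a path in $L$ without increasing the length. The excursion has length $\varepsilon(Q)+2\ge d_R(a_R,b_R)+2$. On the other hand, Lemma~\ref{lem2.8} gives $d_L(a_L,b_L)\le d_R(a_R,b_R)+2$ in every case (with equality to $d_R(a_R,b_R)$ when $n\ne 3k$), so the excursion is at least as long as a shortest $a_Lb_L$-path in $L$. Substituting a shortest $a_Lb_L$-path for each excursion turns $P$ into a walk from $x$ to $y$ lying entirely in $L$ whose length is at most $\varepsilon(P)=d_{VQ_n}(x,y)$. Since the length of any $xy$-walk in $L$ is at least $d_L(x,y)$, we obtain $d_L(x,y)\le d_{VQ_n}(x,y)$, and together with the easy inequality this yields equality.

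The only point requiring care is that the replacement bound $d_L(a_L,b_L)\le d_R(a_R,b_R)+2$ must hold uniformly for both $n\ne 3k$ and $n=3k$; this is precisely what Lemma~\ref{lem2.8} supplies, the worst case being $n=3k$, where the two side-distances may differ by as much as $2$, which is exactly compensated by the two transversal edges spent entering and leaving $R$. Handling several excursions needs no extra argument, since we only track total length and are content to produce a possibly non-simple walk in $L$ rather than a path.
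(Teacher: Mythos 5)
Your proof is correct and follows essentially the same route as the paper: take a shortest $xy$-path in $VQ_n$, isolate its excursions into $R$, and use Lemma~\ref{lem2.8} to replace each excursion (of length $\varepsilon(Q)+2\ge d_R(a_R,b_R)+2\ge d_L(a_L,b_L)$) by an $L$-path of no greater length. Your treatment of several excursions via a walk in $L$ is in fact slightly more careful than the paper's ``without loss of generality, two sections,'' but the key lemma and the compensation of the $+2$ by the two transversal edges are identical.
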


\begin{proof}
Let $x$ and $y$ be in $L$ and $P$ a shortest $xy$-path in $VQ_n$. If
$P\cap R\ne \emptyset$, then $P\cap L$ consists of several sections
of $P$. Without loss of generality, assume that $P\cap L$ consists
of two sections, $P_{xu_L}$ and $P_{v_Ly}$. Then $u_Rv_R$-section
$P_{u_Rv_R}$ of $P$ from $u_R$ to $v_R$ is in $R$. By
Lemma~\ref{lem2.8}, $d_L(u_L,v_L)\le
d_R(u_R,v_R)+2=\varepsilon(P_{u_Rv_R})+2$. Since $P$ is a shortest
$xy$-path in $VQ_n$, we have that
 $$
 \begin{array}{rl}
 d_{VQ_n}(x,y)&\le d_L(x,y)=
\varepsilon(P_{xu_L})+d_L(u_L,v_L)+\varepsilon(P_{v_Ly})\\
&\le \varepsilon(P_{xu_L})+\varepsilon(P_{u_Rv_R})+2+\varepsilon(P_{v_Ly})\\
&=\varepsilon(P)=d_{VQ_n}(x,y),
\end{array}
$$
which implies $d_L(x,y)=d_{VQ_n}(x,y)$. The corollary follows.
\end{proof}

\begin{cor}\label{cor2.10}
Let $VQ_n=L\odot R$, $x\in L$ and $y\in R$. Then there is an
$n$-transversal edge $u_Lu_R$ such that
$d_{VQ_n}(x,y)=d_L(x,u_L)+1+d_R(u_R,y)$.
\end{cor}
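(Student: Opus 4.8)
The plan is to prove both inequalities for a single, carefully chosen $n$-transversal edge. For the easy direction, observe that \emph{every} $n$-transversal edge $u_Lu_R$ gives an upper bound: concatenating a shortest $xu_L$-path in $L$, the edge $u_Lu_R$, and a shortest $u_Ry$-path in $R$ yields an $xy$-walk of length $d_L(x,u_L)+1+d_R(u_R,y)$, so by Corollary~\ref{cor2.9} we have $d_{VQ_n}(x,y)\le d_L(x,u_L)+1+d_R(u_R,y)$ for all such edges. Thus it suffices to exhibit one $n$-transversal edge for which the reverse inequality also holds.

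To produce that edge, I would take a shortest $xy$-path $P$ in $VQ_n$, so $\varepsilon(P)=d_{VQ_n}(x,y)$. Since $x\in L$ and $y\in R$, the path $P$ uses at least one $n$-transversal edge, so the set of $n$-transversal edges on $P$ is nonempty and I can select the \emph{last} one, say $u_Lu_R$ with $u_L\in L$ and $u_R\in R$, meaning that the subpath of $P$ from $u_R$ to $y$ uses no further $n$-transversal edge and hence lies entirely in $R$. Writing $P=P_{xu_L}+u_Lu_R+P_{u_Ry}$, I have $\varepsilon(P)=\varepsilon(P_{xu_L})+1+\varepsilon(P_{u_Ry})$.

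Now I would bound each piece using subpath-optimality together with Corollary~\ref{cor2.9}. Since $x,u_L\in L$, Corollary~\ref{cor2.9} gives $d_L(x,u_L)=d_{VQ_n}(x,u_L)\le\varepsilon(P_{xu_L})$, the inequality holding because $P_{xu_L}$ is a subpath of the shortest path $P$. Likewise, since $u_R,y\in R$ and $P_{u_Ry}$ lies in $R$, Corollary~\ref{cor2.9} gives $d_R(u_R,y)=d_{VQ_n}(u_R,y)\le\varepsilon(P_{u_Ry})$. Adding these and inserting the edge $u_Lu_R$ yields $d_L(x,u_L)+1+d_R(u_R,y)\le\varepsilon(P)=d_{VQ_n}(x,y)$. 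Combined with the upper bound from the first paragraph, this forces equality $d_{VQ_n}(x,y)=d_L(x,u_L)+1+d_R(u_R,y)$ for this particular $n$-transversal edge, which is exactly the claim.

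The only subtle point—and the step I expect to be the crux—is the identification of the last $n$-transversal edge and the observation that everything after it stays in $R$; this is what lets me invoke Corollary~\ref{cor2.9} cleanly on the tail $P_{u_Ry}$. For the head $P_{xu_L}$, note that it need not stay in $L$, but this causes no difficulty: I only use that its two endpoints lie in $L$, so Corollary~\ref{cor2.9} applies to $d_L(x,u_L)$ regardless of how $P_{xu_L}$ wanders between $L$ and $R$. Everything else is the routine pairing of the triangle-inequality upper bound with the shortest-path lower bound.
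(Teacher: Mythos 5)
Your proof is correct and complete. The paper actually states Corollary~\ref{cor2.10} without any proof at all, so there is no authorial argument to compare against; your write-up supplies exactly the missing justification, and it does so in the same spirit as the paper's proof of Corollary~\ref{cor2.9} (decompose a shortest $xy$-path in $VQ_n$ at a transversal edge and control each piece via the $d_H=d_{VQ_n}$ identity). One small remark: the step you single out as the crux --- choosing the \emph{last} transversal edge so that the tail $P_{u_Ry}$ stays in $R$ --- is not actually needed. As you yourself observe for the head $P_{xu_L}$, Corollary~\ref{cor2.9} only requires that the two endpoints of a segment lie in the same half, so $d_R(u_R,y)=d_{VQ_n}(u_R,y)\le\varepsilon(P_{u_Ry})$ holds no matter how that tail wanders; any transversal edge of $P$ traversed in the $L$-to-$R$ direction (at least one exists since $x\in L$ and $y\in R$) would serve. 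This only simplifies your argument; nothing in it is wrong.
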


\section{Main Results}

A graph $G$ of order $n$ is said to be {\it $\ell$-pancyclic} (resp.
{\it $\ell$-vertex-pancyclic, $\ell$-edge-pancyclic}) if it contains
(resp. each of its vertices, edges is contained in ) cycles of every
length from $\ell$ to $n$. Clearly, an $\ell$-edge-pancyclic graph
must is $\ell$-vertex-pancyclic and $\ell$-pancyclic.

We consider edge-pancyclity of $VQ_n$. Since $VQ_n$ contains no
triangles, any edge is not contained in a cycle of length $3$.
Lemma~\ref{lem2} shows that any edge in $VQ_n$ ($n\ge 2$) is
contained in a cycle of length $4$. Lemma~\ref{lem3} shows that any
$n$-transversal edge is not contained in a cycle of length $5$ if
$n\ne 3k$ for $k\ge 1$. In general, we have the following result.

\begin{thm}\label{thm3.1}
For $n\ge 2$, every edge of $VQ_n$ is contained in cycles of every
length from $4$ to $2^n$ except $5$ and, hence, $VQ_n$ is
$6$-edge-pancyclic for $n\ge 3$.
\end{thm}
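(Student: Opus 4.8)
The plan is to argue by induction on $n$, exploiting the recursive splitting $VQ_n=L\odot R$ with $L=VQ^0_{n-1}$ and $R=VQ^1_{n-1}$. The base cases $n=2$ (where $VQ_2$ is a single $4$-cycle) and $n=3$ (where $VQ_3\cong CQ_3$, so the required cycles of lengths $4,6,7,8$ are verified directly from Lemma~\ref{lem2}, Lemma~\ref{lem4}, and the small order of the graph) are checked by hand. For $n\ge 4$ I assume the statement for $VQ_{n-1}$ and use it in two derived forms: first, every edge of $L$ (and of $R$) lies on cycles of all lengths in $\{4,6,7,\dots,2^{n-1}\}$; and second, deleting the edge from each such cycle and keeping the edge itself for length $1$, between the endpoints of any edge of $L$ (or $R$) there are paths, staying inside that half, of every length in $\{1,3,5,6,7,\dots,2^{n-1}-1\}$. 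To these I adjoin Lemma~\ref{lem2.7}, which supplies paths of the near-Hamiltonian lengths $2^{n-1}-2$ and $2^{n-1}-1$ between \emph{any} two vertices of a half.

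The inductive step splits according to the type of the given edge $e$. Write $x_R$ for the unique neighbour in $R$ of a vertex $x\in L$ (Lemma~\ref{lem1}). Suppose first that $e\in L$. Lengths $4,6,7,\dots,2^{n-1}$ come directly from the induction hypothesis inside $L$. For a length $\ell$ with $2^{n-1}+1\le\ell\le 2^n$ I take a cycle $C_L$ through $e$ in $L$, of length $\ell_L\in\{4,6,7,\dots,2^{n-1}\}$, pick an edge $u_Lv_L$ of $C_L$ other than $e$, delete it, and splice in the detour $u_L,u_R,Q,v_R,v_L$, where $Q$ is a $u_Rv_R$-path in $R$; the new cycle through $e$ has length $\ell_L+\varepsilon(Q)+1$. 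Taking $\varepsilon(Q)\in\{2^{n-1}-2,2^{n-1}-1\}$ from Lemma~\ref{lem2.7} (which needs no adjacency of $u_R$ and $v_R$) while letting $\ell_L$ range over $\{4,6,7,\dots,2^{n-1}\}$ realises every length in $[2^{n-1}+3,2^n]$, and the two remaining targets $2^{n-1}+1,2^{n-1}+2$ are obtained with $\varepsilon(Q)=1$ and $\ell_L\in\{2^{n-1}-1,2^{n-1}\}$. Throughout one checks that $\ell_L=5$ and $\varepsilon(Q)\in\{2,4\}$ are never forced.

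For the second type, let $e=xy$ be an $n$-transversal edge with $x\in L$ and $y=x_R\in R$. Lengths $4,6,7$ are already furnished by Lemma~\ref{lem2} and Lemma~\ref{lem4}. For $8\le\ell\le 2^n$ I pick a neighbour $u_L$ of $x$ in $L$; then $xu_L$ is an edge of $L$ and, provided $x\mapsto x_R$ preserves it, $yu_R$ is the corresponding edge of $R$, where $u_R=(u_L)_R$. I close the cycle $x,P_L,u_L,u_R,P_R,y,x$, where $P_L$ is an $xu_L$-path in $L$ and $P_R$ a $u_Ry$-path in $R$, obtaining length $\varepsilon(P_L)+\varepsilon(P_R)+2$. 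Since both $xu_L$ and $yu_R$ are edges, the induction hypothesis makes every length in $\{1,3,5,6,\dots,2^{n-1}-1\}$ available for each of $\varepsilon(P_L)$ and $\varepsilon(P_R)$, and the sumset of this set with itself, shifted by $2$, covers $\{8,9,\dots,2^n\}$, which completes this case.

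The one genuine difficulty is the behaviour of the matching $x\mapsto x_R$ when $n=3k$, where some $n$-transversal edges are \emph{crossing} (Lemma~\ref{lem1a}): there $x\mapsto x_R$ no longer carries an edge of $L$ to an edge of $R$, so $u_Rv_R$ (respectively $yu_R$) need not be an edge, and the steps above that use a length-$1$ path or the preserved edge can fail. This is precisely what Lemma~\ref{lem2.8} and Corollary~\ref{cor2.10} are built to control, and the plan is to use them to choose the rerouted edge $u_Lv_L$ (respectively the neighbour $u_L$) so as to avoid the vertices incident to a crossing transversal edge; since these form a proper subset and the relevant cycles are long, such a choice always exists. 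Handling this crossing case cleanly—rather than the length bookkeeping, which is routine—is where I expect the real work to lie.
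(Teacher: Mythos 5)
Your overall architecture (induction on the $L\odot R$ split, base case $n=3$ via $CQ_3$, Lemma~\ref{lem2.7} supplying the near-Hamiltonian paths in the other half) is the same as the paper's, and your length bookkeeping is fine. The genuine gap is exactly where you suspect it, but your proposed repair does not work. Your construction leans twice on the claim that the transversal matching $x\mapsto x_R$ carries an edge of $L$ to an edge of $R$: once to get $\varepsilon(Q)=1$ for the lengths $2^{n-1}+1$ and $2^{n-1}+2$ in Case 1, and once for the entire Case 2 splice $x,P_L,u_L,u_R,P_R,y,x$, which needs $yu_R$ to be an edge of $R$. Your fix is to ``choose $u_Lv_L$ (resp.\ $u_L$) so as to avoid the vertices incident to a crossing transversal edge.'' In Case 2 this is impossible whenever $e=xy$ is itself a crossing $n$-transversal edge ($n=3k$): then $x$ is such a vertex and it is forced to be an endpoint of the spliced edge. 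In that situation the right move is the opposite of what you prescribe --- one must choose $u_L$ so that $u_Lu_R$ is \emph{also} crossing (this is precisely what the proof of Lemma~\ref{lem2} does, taking $x=01b\beta$, $u_L=01\bar b\beta$, $u_R=11b\beta$, $y=11\bar b\beta$); picking a ``normal'' neighbour $u_L$ generally makes $yu_R$ a non-edge. In Case 1 your existence claim is asserted rather than proved, and the premise is off: the vertices incident to crossing transversal edges are half of $L$, not a thin set, so ``a proper subset'' is not by itself an argument; a counting argument on how a long cycle can meet the $2^{n-2}$ problematic edges is needed, and for the length-$(2^{n-1}-1)$ cycle it is delicate enough that the surviving good edge could a priori coincide with $e$.

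The paper avoids all of this by never asking the matching to preserve edges. For the lengths just above $2^{n-1}$ in Case 1 it attaches the detour at the two endpoints $x,z$ of a \emph{path} through $e$ (taken inside a Hamiltonian cycle of $L$ through $e$) and joins $x_R$ to $z_R$ by a path of length $2^{n-1}-1$ or $2^{n-1}-2$, which Lemma~\ref{lem2.7} provides for an \emph{arbitrary} pair in $R$ --- no adjacency of $x_R$ and $z_R$ is required. In Case 2 it uses the explicit $4$-cycle of Lemma~\ref{lem2} (whose construction already handles the crossing case) together with the induction hypothesis in $L$ for $\ell\le 2^{n-1}+2$, and again Lemma~\ref{lem2.7} with a free choice of $z$ for larger $\ell$. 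If you want to keep your splice, you must add the crossing-edge analysis sketched above; as written, Case 2 fails for crossing transversal edges and Case 1 is incomplete.
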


\begin{proof}\
By Lemma~\ref{lem2}, we only need to show that every edge of $VQ_n$
is contained in cycles of every length from $6$ to $2^n$ for $n\ge
3$. Let $\ell$ be an integer with $6\le \ell\le 2^n$ and $xy$ be an
edge in $VQ_n$. In order to prove the theorem, we only need to show
that $xy$ lies on a cycle of length $\ell$. We proceed by induction
on $n\ge 3$.

Since $VQ_3\cong CQ_3$, by Lemma~\ref{lem6}, the conclusion is true
for $n=3$. Assume the induction hypothesis for $n-1$ with $n\ge 4$.
Let $VQ_n=L\odot R$. There are two cases.

\vskip6pt

{\bf Case 1}\ $x, y\in L$ or $x,y\in R$. Without loss of generality,
let $x,y\in L$.

By the induction hypothesis, we only need to consider $\ell$ with
$2^{n-1}+1\le\ell\le 2^{n}$.

If $\ell=2^{n-1}+1$, then let $x_R$ and $y_R$ be the neighbors of
$x$ and $y$ in $R$, respectively. By Lemma~\ref{lem2.7}, there
exists an $x_Ry_R$-path $P_{x_Ry_R}$ of length  $2^{n-1}-2$ in $R$.
Then $xx_R+P_{x_Ry_R}+y_Ry+xy$ is a cycle of length $2^{n-1}+1$.

If $2^{n-1}+2\le\ell\le 2^{n}$, let $\ell_0=\ell-2^{n-1}-1$, then
$1\le\ell_0\le 2^{n-1}-1$. By Lemma~\ref{lem2.7}, there exists a
cycle $C$ of length $2^{n-1}$ containing the edge $xy$ in $L$. We
choose an $xz$-path $P_{xz}$ of length $\ell_0$ in $C$ that contains
$xy$. Let $x_R$ and $z_R$ be the neighbors of $x$ and $z$ in $R$,
respectively. By Lemma~\ref{lem2.7}, there exists an $x_Rz_R$-path
$P_{x_Rz_R}$ of length  $2^{n-1}-1$ in $R$. Then
$xx_R+P_{x_Rz_R}+z_Rz+P_{xz}$ is a cycle of length $\ell$ containing
the edge $xy$ in $VQ_n$.

\vskip6pt

{\bf Case 2}\ $x\in L$ and $y\in R$.

In this case, $xy$ is an $n$-transversal edge. By Lemma~\ref{lem4},
the conclusion is true for each $\ell=6,7$. Assume $\ell\ge 8$
below.

If $\ell\le 2^{n-1}+2$, let $\ell_0=\ell-2$, then $6\le\ell_0\le
2^{n-1}$. By Lemma~\ref{lem2}, there exists a 4-cycle
$C=(x,u_L,u_R,y)$. By the induction hypothesis, there exists a cycle
$C_L$ of length $\ell_0$ that contains $xu_L$ in $L$. Then, $C\cap
C_L=\{xu_L\}$, and so $C\cap C_L-\{xu_L\}$ is a cycle of length
$\ell$ containing $xy$.

If $2^{n-1}+3\le\ell\le 2^n$, let $\ell_0=\ell-2^{n-1}-1$, then
$2\le\ell_0\le 2^{n-1}-1$. Choose a vertex $u$ in $L$ rather than
$x$, By Lemma~\ref{lem2.7}, there exists an $xu$-path $P_{xu}$ of
length $2^{n-1}-1$ in $L$, from which we can choose an $xz$-path
$P_{xz}$ of length $\ell_0$. Let $z_R$ be the neighbor of $z$ in
$R$. By Lemma~\ref{lem2.7}, there exists a $z_Ry$-path $P_{z_Ry}$ of
length $2^{n-1}-1$. Thus, $P_{xz}+zz_R+P_{z_Ry}+xy$ is a cycle of
length $\ell$ containing $xy$ in $VQ_n$.

The theorem follows.
\end{proof}

\vskip6pt

A graph $G$ of order $n$ is said to be {\it panconnected} if for any
two distinct vertices $x$ and $y$ with distance $d$ in $G$ there are
$xy$-paths of every length from $d$ to $n-1$.

We consider panconnectivity of $VQ_n$. Since $VQ_n$ contains no
triangles, there exist no $xy$-paths of length two if $x$ and $y$
are adjacent. Lemma~\ref{lem3} shows that there exist no $xy$-paths
of length $4$ if $xy$ is an $n$-transversal edge in $VQ_n$ if $n\ne
3k$ for $k\ge 1$. In general, we have the following result.

\begin{thm}
For $n\ge 3$, any two vertices $x$ and $y$ in $VQ_n$ with distance
$d$, there exist $xy$-paths of every length from $d$ to $2^n-1$
except $2,4$ if $d=1$.
\end{thm}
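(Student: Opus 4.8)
The plan is to induct on $n$, taking the base case $n=3$ from the isomorphism $VQ_3\cong CQ_3$ and Lemma~\ref{lem6}, which already yields $xy$-paths of every length from $d$ to $7$ when $d\ge 2$ and of every length except $2$ when $d=1$. Before the induction I would dispose of the case $d=1$ for all $n$ directly from the already established Theorem~\ref{thm3.1}: when $x$ and $y$ are adjacent, $xy$ is an edge, hence lies on a cycle of every length from $4$ to $2^n$ except $5$, and deleting $xy$ from such a cycle produces an $xy$-path of every length from $3$ to $2^n-1$ except $4$; adjoining the trivial length-$1$ path gives exactly every length from $1$ to $2^n-1$ except $2$ and $4$. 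This reduces the theorem to proving, for $d\ge 2$, that $xy$ is joined by paths of every length from $d$ to $2^n-1$ with no omissions, and this is what I would establish by induction with $VQ_n=L\odot R$.

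In the inductive step ($n\ge 4$) I split on whether $x$ and $y$ lie in the same half. If $x,y\in L$, then Corollary~\ref{cor2.9} gives $d_L(x,y)=d$, so the induction hypothesis inside $L$ supplies all lengths from $d$ to $2^{n-1}-1$. For $2^{n-1}\le\ell\le 2^n-1$ I would route through $R$: the two smallest values come from $x\,x_R+P+y_R\,y$ with $P$ an $x_Ry_R$-path in $R$ of length $2^{n-1}-2$ or $2^{n-1}-1$ (Lemma~\ref{lem2.7}), and the remaining values from taking a Hamiltonian $xy$-path $H$ of $L$ (Lemma~\ref{lem2.7}) and replacing one of its edges $zw$ by $z\,z_R+P_R+w_R\,w$, where $P_R$ is a $z_Rw_R$-path in $R$; the total length is then $2^{n-1}+\varepsilon(P_R)$, and letting $\varepsilon(P_R)$ vary (shortening $H$ through the induction hypothesis when required) sweeps the whole interval.

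If instead $x\in L$ and $y\in R$, I would apply Corollary~\ref{cor2.10} to choose a transversal edge $u_Lu_R$ with $d=d_L(x,u_L)+1+d_R(u_R,y)$ and realise every path as $P_L+u_Lu_R+P_R$, the concatenation of an $xu_L$-path $P_L$ in $L$ and a $u_Ry$-path $P_R$ in $R$. The induction hypothesis makes $\varepsilon(P_L)$ range over $[d_L(x,u_L),2^{n-1}-1]$ and $\varepsilon(P_R)$ over $[d_R(u_R,y),2^{n-1}-1]$, so $\varepsilon(P_L)+\varepsilon(P_R)+1$ sweeps the whole interval $[d,2^n-1]$; the degenerate cases $x=u_L$ or $y=u_R$, where one half-path collapses, I would absorb by also permitting a transversal edge that does not realise the distance when the longer lengths are needed.

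The main obstacle I expect is the length bookkeeping rather than any single construction, namely guaranteeing that no target length is skipped. Two effects threaten gaps. First, whenever a sub-pair is adjacent the induction hypothesis omits the sub-lengths $2$ and $4$, so the corresponding total lengths have to be recovered by shifting length between the two halves (trading $\varepsilon(P_L)$ against $\varepsilon(P_R)$, or replacing $H$ by a shorter $xy$-path). Second, when $n=3k$ Lemma~\ref{lem2.8} only guarantees $|d_L-d_R|\le 2$, so the transversal neighbours of an edge $zw$ of $L$ may lie at distance up to $3$ in $R$, and I must choose $zw$ so that the attainable $z_Rw_R$-path lengths still cover the values needed. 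Checking that the two length-intervals always overlap enough to leave no gap---which rests on $2^{n-1}-1\ge 7$, i.e.\ $n\ge 4$---is the delicate point; I would handle the first few lengths above $2^{n-1}$ by hand and the rest by the interval-sum argument, while verifying throughout that the $L$- and $R$-segments never share an endpoint.
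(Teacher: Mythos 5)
Your skeleton --- disposing of $d=1$ via Theorem~\ref{thm3.1}, inducting on $n$ with base case $VQ_3\cong CQ_3$, and splitting on whether $x$ and $y$ lie in the same half using Corollaries~\ref{cor2.9} and~\ref{cor2.10} --- is exactly the paper's. The genuine gap is in how you propose to cover the lengths $\ell$ with $2^{n-1}+2\le\ell\le 2^n-1$. In Case 1 you replace an edge $zw$ of a Hamiltonian $xy$-path of $L$ by a detour $z\,z_R+P_R+w_R\,w$ and let $\varepsilon(P_R)$ vary over $[2,2^{n-1}-1]$. But that interval is not fully attainable: when $z_Rw_R$ is an edge the induction hypothesis forbids $\varepsilon(P_R)\in\{2,4\}$, and when the relevant dimension is a multiple of $3$ Lemma~\ref{lem2.8} only gives $d_R(z_R,w_R)\le 3$, so the small values may be out of reach altogether. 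You correctly identify both threats, but the proposed remedy (``shifting length between the two halves'', ``handling the first few lengths by hand'') is never carried out, and that is precisely where the content of the proof lies. The same issue recurs in your Case 2: letting $\varepsilon(P_L)$ and $\varepsilon(P_R)$ range independently over their full intervals fails when $d_L(x,u_L)=1$ or $d_R(u_R,y)=1$ (which happens already for $d=2$), since the sub-lengths $2$ and $4$ are then excluded.

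The paper's device, which you should adopt, is to never vary the length of the $R$-segment at all: Lemma~\ref{lem2.7} supplies a path of length exactly $2^{n-1}-1$ (or $2^{n-1}-2$) between \emph{any} pair of vertices of $R$, with no exceptions and no dependence on their distance. So in Case 1 one takes a Hamiltonian $xy$-path of $L$, cuts it at the vertex $z$ at distance $\ell_0=\ell-2^{n-1}-1$ from $x$ along that path, and routes $z\to z_R\to\cdots\to y_R\to y$ through a Hamiltonian path of $R$; as $\ell_0$ runs over $[1,2^{n-1}-2]$ the total length sweeps $[2^{n-1}+2,2^n-1]$ with no holes and no parity obstruction. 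In Case 2 the paper likewise fixes $\varepsilon(P_R)=d_R(u_R,y)$ and varies only $\varepsilon(P_L)$ for $\ell\le 2^{n-1}$, and for $\ell\ge 2^{n-1}+1$ cuts a Hamiltonian $xy_L$-path of $L$ and appends a Hamiltonian $z_Ry$-path of $R$. With your variable-length detours replaced by these fixed-length ones, your argument closes; as written, the ``delicate point'' you flag is a real hole rather than mere bookkeeping.
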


\begin{proof}
Let $x$ and $y$ be any two vertices in $VQ_n$ with distance $d$.
First, we note that if $d=1$ then the theorem is true by
Theorem~\ref{thm3.1}. In the following discussion, we always assume
$d\ge 2$. We only need to prove that there exist $xy$-paths of every
length from $d+1$ to $2^n-1$ .

We proceed by induction on $n\ge 3$. Since $VQ_3\cong CQ_3$, by
Lemma~\ref{lem6}, the conclusion is true for $n=3$. Assume the
induction hypothesis for $n-1$ with $n\ge 4$. Let $VQ_n=L\odot R$.

\vskip6pt {\bf Case 1}. $x,y\in L$ or $x,y\in R$. Without loss of
generality, let $x,y\in L$.

By Corollary~\ref{cor2.9}, $d_L(x,y)=d$. By the induction
hypothesis, we only need to consider $\ell$ with $2^{n-1}\le \ell\le
2^n-1$.

If $2^{n-1}\le \ell\le 2^{n-1} +1$, then $2^{n-1}-2\le \ell-2\le
2^{n-1}-1$. Let $x_R$ and $y_R$ be the neighbors of $x$ and $y$ in
$R$, respectively. By Lemma~\ref{lem2.7}, there exists an
$x_Ry_R$-path $P_R$ of length $\ell-2$ in $R$. Then
$x_Ry_R+P_R+yy_R$ is an $xy$-path of length $\ell$ in $VQ_n$.

If $2^{n-1}+2\le \ell\le 2^n-1$, let $\ell_0=\ell-2^{n-1}-1$. then
$1\le \ell_0\le 2^{n-1}-2$. By Lemma~\ref{lem2.7}, there exists an
$xy$-path $P_{xy}$ of length $2^{n-1}-1$ in $L$. We choose an
$xz$-path $P_{xz}$ of length $\ell_0$ in $P_{xy}$. Clearly,
$z\notin\{x, y\}$. Let $z_R$ and $y_R$ be the neighbors of $z$ and
$y$ in $R$, respectively. By Lemma~\ref{lem2.7}, there exists a
$z_Ry_R$-path $P_R$ of length $2^{n-1}-1$ in $R$. Then
$P_{xz}+zz_R+P_R+yy_R$ is an $xy$-path of length $\ell$ in $VQ_n$.

\vskip6pt {\bf Case 2}. $x\in L$ and $y\in R$.

By Corollary~\ref{cor2.10}, there is a shortest $xy$-path $P_{xy}$
in $VQ_n$ such that $P_{xy}=P_{xu_L}+u_Lu_R+P{u_Ry}$, where $u_L\in
L$ and $u_R\in R$, $\varepsilon(P_{xu_L})=d_L(x,u_L)$ and
$\varepsilon(P_{u_Ry})=d_R(u_R,y)$. Thus,
$d=\varepsilon(P_{xu_L})+1+\varepsilon(P_{u_Ry})=d_L(x,u_L)+1+
d_R(u_R,y)$. Since $d\ge 2$, without loss of generality, assume
$d_L(x,u_L)\ge d_R(u_R,y)$.

If $d+1\le \ell\le 2^{n-1}$, let $\ell_0=\ell-d_R(u_R,y)-1$, then
$d_L(x,u_L)+1\le \ell_0\le 2^{n-1}-1$. By the induction hypothesis,
there exists an $xu_L$-path $P'$ of length $\ell_0$ in $L$. Then
$P'+u_Lu_R+P_{u_Ry}$ is an $xy$-path of length $\ell$ in $VQ_n$.

If $2^{n-1}+1\le \ell\le 2^n-1$, let $\ell_0=\ell-2^{n-1}$, then
$1\le \ell_0\le 2^{n-1}-1$. Let $y_L$ be the neighbor of $y$ in $L$.
Then $y_L\ne x$ since $x$ and $y$ are not adjacent. By
Lemma~\ref{lem2.7}, there exists an $xy_L$-path $P_{xy_L}$ of length
$2^{n-1}-1$ in $L$. We choose an $xz$-path $P_{xz}$ of length
$\ell_0$ in $P_{xy_L}$. Let $z_R$ be the neighbor of $z$ in $R$. By
Lemma~\ref{lem2.7}, there exists a $z_Ry$-path $P_{z_Ry}$ of length
$2^{n-1}-1$ in $R$. Then $P_{xz}+zz_R+P_{z_Ry}$ is an $xy$-path of
length $\ell$ in $VQ_n$.

The theorem follows.
\end{proof}

\end{document}